\documentclass[12pt]{article}
\usepackage{amssymb,url}
\newtheorem{theorem}{Theorem}[section]
\newtheorem{prop}[theorem]{Proposition}
\newtheorem{cor}[theorem]{Corollary}

\newenvironment{proof}{\prepf\rm}{\endprepf}
\newcommand{\qed}{\hfill$\Box$}

\newenvironment{example}{\preex\rm}{\endpreex}

\newcommand{\ComGr}{\mathop{\mathrm{Com}}}
\newcommand{\DComGr}{\mathop{\mathrm{DCom}}}
\newcommand{\EPowGr}{\mathop{\mathrm{EPow}}}

\begin{document}

\title{Between the enhanced power graph and the commuting graph}
\author{Peter J. Cameron\footnote{School of Mathematics and Statistics,
University of St Andrews, North Haugh, St Andrews, Fife KY16 9SS, U.K.;
email \texttt{pjc20@st-andrews.ac.uk}}\quad
and Bojan  Kuzma\footnote{University of Primorska, Glagolja\v ska 8, Koper,
Slovenia; email\hfil\break \texttt{bojan.kuzma@famnit.upr.si}} 
\thanks{The second author acknowledges the financial support from the Slovenian Research
Agency, ARRS (research core funding No. P1-0222 and No. P1-0285).}}

\date{}
\maketitle

\begin{abstract}
The purpose of this note is to define a graph whose vertex set is a finite
group $G$, whose edge set is contained in that of the commuting graph of $G$
and contains the enhanced power graph of $G$. We call this graph the deep
commuting graph of $G$. Two elements of $G$ are joined in the deep commuting
graph if and only if their inverse images in every central extension of $G$
commute.

We give conditions for the graph to be equal to either of the enhanced power
graph and the commuting graph, and show that the automorphism group of $G$
acts as automorphisms of the deep commuting graph.
\end{abstract}

\section{Introduction}

Let $G$ be a finite group. Among a number of graphs defined on the vertex set
$G$ reflecting some algebraic properties of the group, two which have been
studied are the following:
\begin{itemize}\itemsep0pt
\item the \emph{commuting graph}, $\ComGr(G)$, first defined by Brauer and
Fowler~\cite{bf},
in which two elements $x$ and $y$ are joined if they commute;
\item the \emph{enhanced power graph}, $\EPowGr(G)$, first introduced by 
Aalipour \emph{et al.}~\cite{aetal}, in which $x$ and $y$ are joined if they
generate a cyclic subgroup of $G$.
\end{itemize}

It is clear that the edge set of the enhanced power graph is contained in that
of the commuting graph. (That is, $\EPowGr(G)$ is a \emph{spanning subgraph}
of $\ComGr(G)$.) The purpose of this note is to define a graph whose
edge set is between these two (contained in the first and containing the
second). We will call it the \emph{deep commuting graph}, for reasons which
will hopefully become clear, and denote it by $\DComGr(G)$.

\paragraph{Remark}
These are not the usual symbols used for these graphs in the literature, but
since we have several different graphs in play, we hope that these notations
will help reduce confusion. This graph will refine the hierarchy introduced
in \cite{cfrd}, which also includes \emph{power graph} of $G$~\cite{kq}, a
spanning subgraph of the enhanced power graph, and the \emph{non-generating
graph}~\cite{gk}, which contains the commuting graph if $G$ is non-abelian;
these graphs will not be considered here. Note also that the vertex set of our
commuting graph is $G$ while in some literature it is $G\backslash Z(G)$.

\medskip

The definition requires some preliminary discussion of Schur covers and
isoclinism.

\section{Covers and isoclinism}

In a group $G$, the \emph{centre} $Z(G)$ is the subgroup consisting of elements
which commute with every element of $G$. The \emph{derived group} $G'$ is the
subgroup generated by all \emph{commutators} $[x,y]=x^{-1}y^{-1}xy$ for
$x,y\in G$.

A \emph{central extension} of a finite group $G$ is a finite group $H$ with a
subgroup $Z\leqslant Z(H)$ such that $H/Z\cong G$. We will think of $G$ as a
quotient of $H$, in other words, the image of a homomorphism whose kernel
is $Z$. If $Z\leqslant Z(H)\cap H'$, it is called a \emph{stem extension}.
The subgroup $Z$ is the \emph{kernel} of the extension.

The deep commuting graph can be defined to have vertex set $G$, with an edge
joining  $x$ and $y$ if and only if $x\neq y$ and the preimages of $x$ and $y$
commute in every central extension of $G$. (It is sometimes more natural to
allow loops, but it makes no difference here.) However, we will give a more
explicit definition shortly.

As a temporary notation, we define the \emph{relative commuting graph} of
$G$ by $Z$ as the graph with vertex set $G$, in which  $x$ and $y$ are joined
if their inverse images in $H$ commute.

Schur showed that every finite group has a unique \emph{Schur multiplier}
$M(G)$, the central subgroup in the largest possible stem extension of $G$.
There are several other characterisations of $M(G)$: for example, it is the
first homology group $H_1(G,\mathbb{Z})$, or the first cohomology group
$H^1(G,\mathbb{C}^\times)$, or the quotient $(R\cap F')/[R,F]$ where $G$
has a presentation $F/R$ as quotient of a free group. The corresponding
stem extension $H$ is called a \emph{Schur cover} of $G$; it is not always uniquely
defined by $G$. For example, the dihedral and quaternion groups of order $8$
are both Schur covers of the Klein group of order $4$.

However, Jones and Wiegold~\cite{jw} showed that a relation weaker than
uniqueness holds, namely that of isoclinism. This means that the commutation
maps $\gamma$ from $H/Z(H)\times H/Z(H)$ to $H'$ defined by
$\gamma(Z(H)x,Z(H)y)=[x,y]$ are essentially the same in the two groups. (Note
that this is independent of the choice of coset representatives.) More
formally, two groups $H_1$ and $H_2$ are said to be \emph{isoclinic} if there
exist isomorphisms $\phi\colon H_1/Z(H_1)\to H_2/Z(H_2)$  and
$\psi\colon H_1'\to H_2'$ such that, for all $x,y\in H_1$,
\begin{equation}\label{eq:isoclinism}
\psi(\gamma_1(Z(H_1)x,Z(H_1)y))=\gamma_2(\phi(Z(H_1)x),\phi(Z(H_1)y)),
\end{equation}
where $\gamma_1$ and $\gamma_2$ are the commutation maps associated with $H_1$
and $H_2$; in other words, the isomorphisms respect the commutation map.
The pair $(\phi,\psi)$ is an \emph{isoclinism} from $H_1$ to $H_2$.

For example, if $H$ is any group and $A$ an abelian group, then $H$ and 
$H\times A$ are isoclinic.

Now an isoclinism maps the set of element-wise commuting pairs of cosets of
$Z(H_1)$ in $H_1$ to the corresponding set in $H_2$. So we have, as noted
in~\cite{pn}, that if $H_1$ and $H_2$ are isoclinic groups of the same order,
then their commuting graphs are isomorphic. In particular, Schur covers of a
group $G$ have isomorphic commuting graphs. More generally, if two groups
$G_1$ and $G_2$ are isoclinic, then $\ComGr(G_i)$ is the lexicographic
product of a complete graph on $Z(G_i)$ with a graph $\Gamma$ which is
the same for both groups.

It is clear that in (\ref{eq:isoclinism}) we may replace $Z(H_i)$ with any subgroup $Z_i$ in the center of $H_i$ provided that the groups $H_1/Z_1$ and $H_2/Z_2$ are isomorphic. Thus, if two central extensions of the same group $G$ are isoclinic (c.f. \cite[Definition  III.1.1]{bt}), then the induced commuting graphs on transversal of $Z_1$ in $H_1$ and on transversal of $Z_2$ in $H_2$ are isomorphic.

We note also that any central extension is isoclinic to a stem extension
\cite[Proposition III.2.6]{bt}, so we lose nothing by considering stem
extensions in what follows.

\paragraph{Definition} The \emph{deep commuting graph} of $G$, denoted by
$\DComGr(G)$, is the relative commuting graph of $G$ with respect to its Schur
multiplier $M(G)$. We note that in this definition the drawing of edges might
depend on choosing  a Schur cover, but the resulting graphs are all isomorphic.
We will show shortly that the edge set is in fact independent of the chosen
Schur cover.  The name is  meant to suggest that $x$ and $y$ are joined if the 
commutator of their preimages in a stem extension of $G$ lies as deep as
possible in the extension.

\begin{example}
As noted, the Klein group of order $4$ has two Schur covers up to isomorphism,
the dihedral and quaternion groups of order~$8$. In fact there are three
different covers isomorphic to the dihedral group, since any one of the three
involutions in the Klein group may be the one that lifts to an element of
order~$4$ in the cover, but a unique cover isomorphic to the quaternion group.
However, in all cases, the commuting graph of the cover is the lexicographic
product of a complete graph of order $2$ with a star $K_{1,3}$; so the deep
commuting graph of the Klein group is isomorphic to $K_{1,3}$ (which is in
fact equal to the enhanced power graph of this group).
\end{example}

\paragraph{Remark}
The relative commuting graph defines a map from quotients of the Schur
multiplier to spanning subgraphs of the commuting graph on $G$; this map is
order-reversing (but as we will see, not one-to-one).

\section{Commuting graph and commuting probability}

As a preliminary to this section, we note a connection between the commuting
graph and the commuting probability of a group $G$.

Let $G$ be a finite group. Then
$G$ acts on itself by conjugation; the stabiliser of a point $x$ is its
centraliser (the set of neighbours of $x$ in the commuting graph), while
the orbits are conjugacy classes. So the Orbit-counting Lemma shows that the
number of conjugacy classes is equal to the average valency of the commuting
graph. Dividing through by $|G|$, we see that the proportion of all ordered
pairs which commute is equal to the ratio of the number of conjugacy classes
to $|G|$. This fraction is called the \emph{commuting probability} of $G$, see
\cite{Eberhard}. We denote it by $\kappa(G)$, and note that it is the ordered
edge-density of the commuting graph (with a loop at each vertex).

\begin{prop}
Suppose that $\pi_i:H_i\to G$ are epimorphisms corresponding to central
extensions of $G$ for $i=1,2$. Suppose that there exists an epimorphism
$\phi:H_1\to H_2$ such that $\pi_2\circ\phi=\pi_1$. Then 
$\kappa(H_1)\leqslant\kappa(H_2)$.
\end{prop}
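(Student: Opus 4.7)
The plan is to count commuting pairs in $H_1$ and $H_2$ and relate them via the fibers of $\phi$. The first step is to observe that $K:=\ker\phi$ is contained in $\ker\pi_1$, since $\pi_2\circ\phi=\pi_1$; because $\pi_1$ corresponds to a central extension, this means $\ker\pi_1\leqslant Z(H_1)$, so in particular $K\leqslant Z(H_1)$. Consequently $\phi$ is itself (up to identifying $H_2$ with $H_1/K$) a central extension of $H_2$ by $K$, and $|H_1|=|K|\cdot|H_2|$.

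Next I would count ordered commuting pairs. Let $C(H)=\{(x,y)\in H\times H:xy=yx\}$, so that $\kappa(H)=|C(H)|/|H|^2$. For any pair $(u,v)\in H_2\times H_2$, the set $\phi^{-1}(u)\times\phi^{-1}(v)$ has cardinality exactly $|K|^2$. Moreover, $(u,v)\in C(H_2)$ if and only if $[a,b]\in K$ for any (equivalently, every) $(a,b)\in\phi^{-1}(u)\times\phi^{-1}(v)$, since $\phi$ is surjective with central kernel. Summing over pairs in $C(H_2)$ gives
\[
|\{(a,b)\in H_1\times H_1:[a,b]\in K\}|=|K|^2\cdot|C(H_2)|.
\]

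The crux is the trivial inclusion $C(H_1)=\{(a,b):[a,b]=1\}\subseteq\{(a,b):[a,b]\in K\}$, which yields $|C(H_1)|\leqslant|K|^2\cdot|C(H_2)|$. Dividing both sides by $|H_1|^2=|K|^2\cdot|H_2|^2$ produces $\kappa(H_1)\leqslant\kappa(H_2)$, as required.

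There is no real obstacle here; the only point that needs a quick verification is the centrality of $\ker\phi$, which follows immediately from the compatibility condition $\pi_2\circ\phi=\pi_1$ together with the definition of a central extension. Everything else is a clean fiber-counting argument.
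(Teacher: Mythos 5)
Your argument is correct and is essentially the paper's proof made explicit: the paper simply notes that commuting pairs in $H_1$ push forward to commuting pairs in $H_2$, and your fiber-counting (each fiber of $\phi\times\phi$ has size $|\ker\phi|^2$, and $|H_1|^2=|\ker\phi|^2|H_2|^2$) is the implicit bookkeeping behind that one-line observation. The only cosmetic difference is that your appeal to the centrality of $\ker\phi$ is not actually needed anywhere --- surjectivity of $\phi$ alone gives both the uniform fiber sizes and the equivalence $[u,v]=1\Leftrightarrow[a,b]\in\ker\phi$.
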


\begin{proof}
This follows from the fact that, for $x,y\in H_1$, if $x$ and $y$ commute then 
$\phi(x)$ and $\phi(y)$ commute in $H_2$. \qed
\end{proof}

Though it is not necessary for our argument, we note that in the cited paper
Eberhard proved that values of the commuting probability for finite groups
are well-ordered by the reverse of the usual order on the unit interval.
Thus, in the situation of the above proposition, either
$\kappa(H_1)=\kappa(H_2)$, or $\kappa(H_1)\leqslant\kappa(H_2)-\epsilon$,
where $\epsilon$ depends only on $\kappa(H_2)$.

\section{Properties of the deep commuting graph}

The two theorems of this section show that the edge set of the deep commuting
graph is independent of the chosen Schur cover, and that it lies between the
edge sets of the enhanced power graph and the commuting graph.

\begin{theorem}
Let $G$ be a finite group.
\begin{enumerate}
\item If $H_1$ and $H_2$ are Schur covers of $G$, then $H_1$ and $H_2$
define the same edge sets of the deep commuting graph (in other words, 
for $x,y\in G$, the inverse images of $x,y$ in $H_1$ commute if and only if
the inverse images in $H_2$ commute).
\item Two elements of $G$ are joined in the deep commuting graph if and
only if their inverse images in every central extension of $G$ commute.
\item The deep commuting graph of $G$ is invariant under the automorphism
group of $G$.
\end{enumerate}
\end{theorem}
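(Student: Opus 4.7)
I would prove (a) first, via the isoclinism theory already set up in the paper; (b) follows by a quotient argument, and (c) drops out by twisting.

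For (a), the Jones--Wiegold theorem \cite{jw}, in the refined form for central extensions of $G$ (the version used in the passage preceding the definition of $\DComGr(G)$, corresponding to \cite[Def.~III.1.1]{bt}), says that any two Schur covers $H_1$ and $H_2$ of $G$ are isoclinic by an isoclinism inducing the identity on the common quotient $G = H_1/M(G) = H_2/M(G)$. The observation recorded just before the definition is that such an isoclinism carries the relation ``preimages commute'' on $G$ literally from one cover to the other, so the edge sets on $G$ coincide.

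For (b), the forward direction is immediate since a Schur cover is itself a central extension. For the converse, fix a Schur cover $\pi : H \to G$ with kernel $M = M(G)$ and suppose preimages of $x, y \in G$ commute in $H$. Let $K$ be any central extension of $G$. By \cite[Prop.~III.2.6]{bt}, $K$ is isoclinic (as a central extension of $G$) to a stem extension $K_0$, and this isoclinism preserves the commutation-of-preimages relation on $G$; so it suffices to verify commutation in $K_0$. Now $K_0$ is a quotient $K_0 = H'/N$ of some Schur cover $H'$ by a central subgroup $N \leqslant M(G)$ --- a consequence of the maximality of the Schur multiplier among kernels of stem extensions. By (a), preimages of $x, y$ commute in $H'$, and the quotient map $H' \to K_0$ sends commutators to commutators, yielding $[\tilde x_{K_0}, \tilde y_{K_0}] = 1$, and hence $[\tilde x_K, \tilde y_K] = 1$, as required.

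For (c), let $\alpha \in \Aut(G)$ and form the twisted projection $\pi' := \alpha \circ \pi : H \to G$ of a fixed Schur cover $(H,\pi)$. The pair $(H, \pi')$ is again a Schur cover of $G$ (same group, same central kernel), so by (a) defines the same deep commuting graph on $G$ as $(H, \pi)$. Since the $\pi'$-preimage of $x$ is the $\pi$-preimage of $\alpha^{-1}(x)$, the edge $\{x, y\}$ of the $\pi'$-graph coincides with the edge $\{\alpha^{-1}(x), \alpha^{-1}(y)\}$ of the $\pi$-graph, and equality of the two graphs forces $x \sim y \Leftrightarrow \alpha^{-1}(x) \sim \alpha^{-1}(y)$, i.e.\ the $\Aut(G)$-invariance. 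The main obstacle lies in the structural group-theoretic input: that the Jones--Wiegold isoclinism between two Schur covers of $G$ can be arranged over the identity on $G$, and (more substantially) that every stem extension of $G$ is realized as a quotient of some Schur cover by a central subgroup of the multiplier; both facts sit close to the Schur-multiplier theory already invoked by the paper, and once granted, (a)--(c) fall into place as sketched.
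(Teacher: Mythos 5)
Your proposal is correct in substance and shares the overall architecture of the paper's proof (reduce to stem extensions via isoclinism; stem extensions are quotients of Schur covers; all Schur covers induce the same relation), but the two arguments distribute the work differently. For (a), the paper does not cite the Jones--Wiegold theorem as a black box: it reconstructs its proof by forming the fibre product $K=\{(h_1,h_2)\in H_1\times H_2:\pi_1(h_1)=\pi_2(h_2)\}$, observing that $K$ is a central extension of $G$ surjecting onto each $H_i$ compatibly with the projections, and then using the commuting-probability proposition ($\kappa(K)\leqslant\kappa(H_i)$, while $\kappa(H_i)$ is minimal among central extensions of $G$, so equality holds and a pair commutes in $H_i$ exactly when its preimage in $K$ does); chaining through $K$ gives literally equal edge sets on $G$. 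You are right to flag that the refinement you need --- isoclinism of the two stem covers \emph{as extensions of $G$ over the identity} --- is the crux: the unrefined Jones--Wiegold statement (isoclinism of the abstract groups) only yields \emph{isomorphic} deep commuting graphs, not equal edge sets, so without the refinement your (a) would not prove what is claimed. The refinement is true, and the fibre product is precisely how one establishes it, so your argument is sound but outsources the paper's main computation to a citation that must be checked to carry the "over $\mathrm{id}_G$" clause. Your (b) is a correctly fleshed-out version of the paper's very terse argument, making explicit the second classical input (every stem extension is an epimorphic image of a stem cover) that the paper hides in the words "and so". Your (c), applying (a) to the two covers $(H,\pi)$ and $(H,\alpha\circ\pi)$, is a clean alternative to the paper's route, which instead reads (c) off the intrinsic characterisation in (b) (the condition "preimages commute in every central extension" is visibly $\Aut(G)$-invariant); both are valid, yours making (c) depend on (a) and the paper's on (b).
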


\begin{proof}
(a) Let $H_1$ and $H_2$ be Schur covers of $G$, having projections
$\pi_i:H_i\to G$ with kernel $Z_i$ for $i=1,2$. Following Jones and Wiegold
\cite{jw}, we define
\[K=\{(h_1,h_2)\in H_1\times H_2:\pi_1(h_1)=\pi_2(h_2)\},\]
and let $\phi_i$ be the coordinate projection of $H_1\times H_2$ onto $H_i$
restricted to $K$, for $i=1,2$. Clearly $\pi_1\circ\phi_1=\pi_2\circ\phi_2$.
Moreover, $K$ is a central extension of $G$ with projection $\pi_1\circ\phi_1$,
with kernel $Z_1\times Z_2$.

It follows from our earlier remark that the commuting probability in
$K$ is equal to that in $H_1$ and $H_2$, so that two elements of $H_i$
commute if and only if their inverse images in $K$ commute. (The homomorphism
$\phi$ shows that $\kappa(K)\le\kappa(H_i)$, but $\kappa(H_i)$ is the
smallest commuting probability of any central extension of $G$.)

Let $\Gamma_1$ and $\Gamma_2$ be the deep commuting graphs on $G$ defined
by $H_1$ and $H_2$ respectively. If $x,y\in G$ with inverse images $a,b\in K$,
then we have
\begin{eqnarray*}
x\sim y\hbox{ in $\Gamma_1$} &\Leftrightarrow& [\phi_1(a),\phi_1(b)]=1
\hbox{ in $H_1$} \\
&\Leftrightarrow& [a,b]=1 \hbox{ in $K$} \\
&\Leftrightarrow& [\phi_2(a),\phi_2(b)]=1 \hbox{ in $H_2$} \\
&\Leftrightarrow& x\sim y \hbox{ in $\Gamma_2$}.
\end{eqnarray*}

\medskip

(b) Any central extension of $G$ is isoclinic to a stem extension, and so
inverse images of two elements of $G$ which commute in the Schur cover also
commute in the given extension.

\medskip

(c) This is now clear. \qed
\end{proof}

\paragraph{Remark} The automorphism group of $G$ permutes the Schur covers
of $G$. Without this theorem, it is clear that if $G$ has a unique Schur cover 
then the deep commuting graph is invariant under all group automorphisms. More
generally, Fry~\cite{fry} defined a class $\mathcal{L}$ of groups $G$ with the
property that $G$ has a Schur cover $H$ such that all automorphisms of $G$
lift to $H$ (in other words, $H$ is fixed by all automorphisms); these 
groups also clearly have the property that all group automorphisms preserve
the deep commuting graph.

\begin{theorem}
Let $G$ be a finite group. Then
\[E(\EPowGr(G))\subseteq E(\DComGr(G))\subseteq E(\ComGr(G)),\]
where $E(\Gamma)$ is the edge set of the graph $\Gamma$.
\end{theorem}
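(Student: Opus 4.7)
The plan is to prove the two inclusions separately using the Schur-cover characterisation of $\DComGr(G)$. Fix a Schur cover $H$ of $G$ with projection $\pi\colon H\to G$ and kernel $Z\le Z(H)$, so that $x\sim y$ in $\DComGr(G)$ exactly when some (equivalently, every) pair of preimages $\tilde x,\tilde y\in H$ commutes.

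For the upper containment $E(\DComGr(G))\subseteq E(\ComGr(G))$ I would simply push forward: if preimages $\tilde x,\tilde y\in H$ of $x,y$ satisfy $\tilde x\tilde y=\tilde y\tilde x$, then applying $\pi$ immediately yields $xy=yx$ in $G$. This step is essentially a tautology, since any group homomorphism preserves commuting pairs.

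For the lower containment $E(\EPowGr(G))\subseteq E(\DComGr(G))$ I would exploit the cyclic generation hypothesis. If $\langle x,y\rangle$ is cyclic, write $\langle x,y\rangle=\langle g\rangle$ and express $x=g^a$, $y=g^b$. Fix any single preimage $\tilde g\in H$ of $g$. Since $\pi(\tilde g^a)=x$ and $\pi(\tilde g^b)=y$, every preimage of $x$ in $H$ has the form $\tilde x=z_1\tilde g^a$ for some $z_1\in Z$, and likewise $\tilde y=z_2\tilde g^b$ with $z_2\in Z$. Because $z_1,z_2$ lie in $Z(H)$ and powers of the single element $\tilde g$ commute among themselves, a short computation gives $\tilde x\tilde y=z_1z_2\tilde g^{a+b}=\tilde y\tilde x$, so $x\sim y$ in $\DComGr(G)$.

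I do not anticipate any substantive obstacle. The mildly subtle point is that a lift $\tilde g$ need not have the same order as $g$, so the preimages $\tilde x,\tilde y$ need not themselves lie in a common cyclic subgroup of $H$; but the possible discrepancies between different lifts are central, and therefore cannot influence a commutator, which is the only feature that matters for the argument.
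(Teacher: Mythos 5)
Your proposal is correct and follows essentially the same route as the paper: the upper inclusion is the trivial push-forward along the projection, and the lower inclusion amounts to observing that all preimages of elements of $\langle g\rangle$ lie in the abelian subgroup $\langle Z,\tilde g\rangle$ (the paper phrases this via the subgroup; you write out the elements $z_1\tilde g^a$, $z_2\tilde g^b$ explicitly, which is the same argument). No gaps.
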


\begin{proof}
Let $H$ be a Schur cover of $G$ with kernel $Z$. Suppose that $x$ and $y$ are
joined in the enhanced power graph of $G$. Then there is an element $z\in G$
such that $x,y\in\langle z\rangle$. Let $a,b,c$ be the preimages of $x,y,z$
in $H$. Then $a,b\in\langle Z,c\rangle$, and this group is abelian since $Z$
is central. So $a$ and $b$ commute, and $x$ and $y$ are joined in the deep
commuting graph of $G$. 

The other inclusion is trivial since if two elements commute then so do
their images under a homomorphism.\qed
\end{proof}

Having these inclusions, it is natural to ask when equality holds. Aalipour
\emph{et al.}~\cite{aetal} characterised groups whose enhanced power graph and
commuting graph are equal: these are precisely the groups having no subgroup
$C_p\times C_p$ for any prime $p$ (so the Sylow subgroups are cyclic or
generalized quaternion). Our next job is to refine this.

\begin{theorem}
The deep commuting graph of the finite group $G$ is equal to its enhanced
power graph if and only if $G$ has the following property: Let $H$
be a Schur cover of $G$, with $H/Z=G$. Then for any subgroup $A$ of $G$, with
$B$ the corresponding subgroup of $H$ (so $Z\leqslant B$ and $B/Z=A$), if $B$
is abelian, then $A$ is cyclic.
\end{theorem}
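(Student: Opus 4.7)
My plan is to prove both implications directly from the definitions, exploiting the bijection between subgroups of $H$ containing $Z$ and subgroups of $G=H/Z$.

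For the sufficiency direction, I would suppose $G$ has the stated property. Since the inclusion $E(\EPowGr(G))\subseteq E(\DComGr(G))$ is already proved, only the reverse inclusion is needed. My approach is to take an arbitrary edge $\{x,y\}$ of $\DComGr(G)$, lift $x,y$ to elements $a,b$ of a Schur cover $H$ (which by definition of the deep commuting graph must satisfy $[a,b]=1$), set $A=\langle x,y\rangle$, and observe that $\pi^{-1}(A)=\langle Z,a,b\rangle$ is abelian since $Z$ is central and $a,b$ commute. Applying the hypothesis to this pair $(A,B)$ then forces $A$ to be cyclic, which is exactly the condition for $\{x,y\}$ to be an edge of $\EPowGr(G)$.

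For the necessity direction, I would assume $\DComGr(G)=\EPowGr(G)$ and let $A\leqslant G$ be any subgroup whose preimage $B\leqslant H$ is abelian. For any two elements $x,y\in A$, their chosen lifts $a,b\in B$ commute, so $x$ and $y$ are adjacent (or equal) in $\DComGr(G)$, and hence in $\EPowGr(G)$; thus $\langle x,y\rangle$ is cyclic. This shows that every two-generated subgroup of $A$ is cyclic, and I would then invoke the standard one-line fact that a finite group with this property must itself be cyclic (pick $g\in A$ of maximum order; for every $h\in A$ the cyclic group $\langle g,h\rangle$ contains $g$, so by maximality it equals $\langle g\rangle$, forcing $h\in\langle g\rangle$).

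The only step that needs genuine verification is the identity $\pi^{-1}(\langle x,y\rangle)=\langle Z,a,b\rangle$ together with the observation that this group is abelian; once this is pinned down, both implications collapse to a line or two. I do not expect a serious obstacle: the auxiliary lemma about finite groups with all two-generator subgroups cyclic is routine, and the rest is essentially bookkeeping between $H$, $Z$, and the quotient $G$.
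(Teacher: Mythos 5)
Your proposal is correct and follows essentially the same route as the paper: both directions hinge on the correspondence $A=\langle x,y\rangle \leftrightarrow B=\langle Z,a,b\rangle$ and on the fact that a finite group in which every two elements generate a cyclic subgroup is itself cyclic. The only difference is cosmetic—you supply the short maximal-order argument for that last fact, which the paper merely asserts.
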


\begin{proof}
Equality of these graphs requires that, if $x,y\in G$ and $a,b$ are inverse
images in $H$, if $a$ and $b$ commute, then $\langle x,y\rangle$ is cyclic.
This is certainly implied by the condition of the theorem, with
$A=\langle x,y\rangle$ and $B=\langle Z,a,b\rangle$. In the other direction,
suppose that the graphs are equal, and let $A\leqslant G$ be such that its
inverse image is abelian. Then for any $x,y\in A$, their inverse images in $B$
commute, and so $\langle x,y\rangle$ is cyclic. But if a finite group $A$ has
the property that any two elements generate a cyclic subgroup, then $A$ is
cyclic. \qed
\end{proof}

\section{The Bogomolov multiplier}

In order to explore equality of the commuting graph and deep commuting graph,
we need to discuss the Bogomolov multiplier of a finite group $G$. This arose
in connection with the work of Artin and Mumford on obstructions to Noether's
conjecture on the pure transcendence of the field of invariants; but we do
not need this background. We refer to \cite{Bog,Jez-Mor,Kun} for further
information.

Let $[x,y]=x^{-1}y^{-1}xy$ be the commutator. It is easy to check that in
every group $G$ the universal commuting identities $[xy,z]=[x^y,z^y][y,z]$,
$[x,yz]=[x,z][x^z,y^z]$, and $[x,x]=1$ hold. Based on this, Miller
\cite{Miller}  defined the \emph{nonabelian exterior square of $G$} as a group
generated by symbols $x\wedge y$, $x,y\in G$ subject to relations
\[(xy)\wedge z=(x^y\wedge z^y)(y\wedge z),\quad x\wedge (yz)=(x\wedge z)(x^z\wedge y^z),\quad x\wedge x=1.\]
It also was shown in \cite{Miller} that  $x\wedge y\mapsto [x,y]$ is a
surjective homomorphism from $G\wedge G$ onto $G$ whose  kernel is naturally
isomorphic to $M(G)$, the Schur multiplier of $G$. Denote
$M_0(G):=\langle x\wedge y\mid[x,y]=1\rangle$; then the group
$B_0(G):=M(G)/M_0(G)$ is known as the Bogomolov multiplier of $G$.
Observe that $B_0(G)$ is isomorphic to $M(G)$ if and only if they have the
same cardinality.

Let $H$ be a central extension of $G$, with $G=H/Z$. Clearly, if two elements
of $H$ commute, then their images under the projection from $H$ to $G$ also
commute. We say that the extension is \emph{commutation preserving} (for short,
CP) if the converse holds.

It was shown by Jezernik and Moravec in \cite[Theorem 4.2]{Jez-Mor} that there
exists a CP stem central  extension $K$ of $G$ with kernel isomorphic to
$B_0(G)$. Indeed, $B_0(G)$ is the largest possible kernel of a CP stem central
extension of $G$. If $|B_0(G)|=|M(G)|$, then $B_0(G)\simeq M(G)$ and clearly
$K$ is also a Schur cover of $G$. 

On the other hand, if $|B_0(G)| <|M(G)|$, then Schur cover of $G$ has larger
cardinality than $K$ and since $K$ was a stem central extension of maximal
cardinality which enjoys commuting lifting property, the Schur  cover does
not enjoy it. That is, the deep commuting graph has fewer edges than the
commuting graph of $G$.

So we have proved:

\begin{theorem}\label{th3}
For the finite group $G$, we have $\DComGr(G)=\ComGr(G)$ if and only if
the Bogomolov multiplier of $G$ is equal to the Schur multiplier.
\end{theorem}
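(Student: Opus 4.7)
The plan is to reformulate the equality $\DComGr(G)=\ComGr(G)$ as the statement that some (equivalently, any) Schur cover of $G$ is itself a commutation preserving extension, and then match this against the maximality property of $B_0(G)$ proved by Jezernik and Moravec.

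First I would observe that, by the definition of the deep commuting graph and Theorem 4.1(a), $\DComGr(G)=\ComGr(G)$ is equivalent to the assertion that some Schur cover $H$ of $G$ is a CP extension. Indeed, two elements $x,y\in G$ are always joined in $\ComGr(G)$ whenever their preimages commute in $H$, so what has to be extracted is the converse implication, which is exactly the CP condition for $H\to G$.

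For the reverse implication of the theorem, assume $|B_0(G)|=|M(G)|$. By the cited result \cite[Theorem 4.2]{Jez-Mor}, there is a CP stem central extension $K$ of $G$ whose kernel has order $|B_0(G)|=|M(G)|$. Since a Schur cover is characterised among stem central extensions by having kernel of order $|M(G)|$, the group $K$ is itself a Schur cover of $G$. Computing $\DComGr(G)$ via $K$ and using the CP property then gives $\DComGr(G)=\ComGr(G)$, and by Theorem 4.1(a) the choice of Schur cover is immaterial.

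For the forward implication, suppose $\DComGr(G)=\ComGr(G)$. Then any Schur cover $H$, being a stem central extension with kernel $M(G)$, is a CP extension by the reformulation above. The maximality clause of \cite[Theorem 4.2]{Jez-Mor}, which says that $B_0(G)$ realises the largest possible kernel among CP stem central extensions of $G$, forces $|M(G)|\leqslant|B_0(G)|$. Combined with the obvious inequality $|B_0(G)|=|M(G)/M_0(G)|\leqslant|M(G)|$, this yields $|B_0(G)|=|M(G)|$, as required.

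The only real subtlety is the first step: showing that equality of the two graphs translates cleanly into the CP property of a Schur cover, uniformly across the (possibly many) Schur covers of $G$. This is handled by Theorem 4.1(a), which guarantees that the edge set of $\DComGr(G)$ does not depend on the chosen cover, so there is no ambiguity in quantifying over Schur covers in either direction of the argument.
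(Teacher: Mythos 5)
Your proposal is correct and follows essentially the same route as the paper: both directions rest on the Jezernik--Moravec result that $B_0(G)$ is the kernel of a CP stem central extension and is maximal among such kernels, with the forward implication in the paper phrased contrapositively ($|B_0(G)|<|M(G)|$ forces the Schur cover not to be CP) where you argue it directly via the maximality clause. Your explicit reduction of graph equality to the CP property of a Schur cover, justified by Theorem 4.1(a), is exactly the (implicit) reformulation the paper relies on.
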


\begin{cor}
Let $G$ be a finite group with $B_0(G)=1$. Then $\DComGr(G)=\ComGr(G)$ if
and only if $M(G)=1$.
\end{cor}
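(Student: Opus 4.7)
The plan is to deduce the corollary directly from Theorem \ref{th3}, using only the fact that by construction $B_0(G)=M(G)/M_0(G)$ is a quotient of $M(G)$. So the condition ``$B_0(G)$ is equal to $M(G)$'' appearing in Theorem \ref{th3} amounts to $|B_0(G)|=|M(G)|$.

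For the forward direction, I would suppose $\DComGr(G)=\ComGr(G)$. Theorem \ref{th3} then yields $|B_0(G)|=|M(G)|$, and combined with the standing hypothesis $B_0(G)=1$ this forces $M(G)=1$.

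For the reverse direction, suppose $M(G)=1$. Then $B_0(G)$, being a quotient of $M(G)$, is also trivial, so $B_0(G)=M(G)=1$ and Theorem \ref{th3} gives $\DComGr(G)=\ComGr(G)$. As a sanity check, one can also see this without invoking Theorem \ref{th3}: if $M(G)=1$ then a Schur cover of $G$ is $G$ itself, so the relative commuting graph with respect to $M(G)$ coincides tautologically with the commuting graph of $G$.

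There is no genuine obstacle; the corollary is merely Theorem \ref{th3} read in the degenerate regime $B_0(G)=1$. Its content is to isolate the observation that, among groups with trivial Bogomolov multiplier, the only ones whose deep commuting graph fills out the full commuting graph are those with trivial Schur multiplier.
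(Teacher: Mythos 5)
Your proposal is correct and is exactly the intended argument: the paper gives no separate proof, treating the corollary as an immediate consequence of Theorem~\ref{th3}, and your reading (that with $B_0(G)=1$ the condition $B_0(G)\cong M(G)$ forces $M(G)=1$, and conversely) is precisely that deduction. The extra sanity check via the trivial Schur cover is a nice touch but not needed.
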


\paragraph{Remark} Kunyavski\u{\i}~\cite{Kun} proved a conjecture of
Bogomolov by showing that the Bogomolov multiplier of a finite simple group
is trivial. Hence the above corollary applies to finite simple groups.

\paragraph{Remark}
We observed that there is a map from quotients of $M(G)$ to spanning subgraphs
of the commuting graph of $G$, which is not one-to-one. We can now add to this
observation: the quotient $M(G)/M_0(G)=B_0(G)$ maps to the full commuting
graph. It may be that the map from quotients $M(G)/Z$ with $Z\le M_0(G)$ is
one-to-one; we have not investigated this.

\section{Examples}

\begin{example}
We begin with an example of a group whose deep commuting graph lies strictly
between the enhanced power graph and the commuting graph.

Take $n\geqslant8$, and let $G$ be the symmetric or alternating group of
degree $n$. Then it is known (see \cite{hh}) that the Schur multiplier of $G$
has order $2$. The alternating group has a unique Schur cover, while the
symmetric group has two; but all three share the property that an involution
in $G$ which is a product of $m$ transpositions lifts to an involution in the
cover if $n\equiv0\!\!\pmod{4}$, while it lifts to an element of order~$4$ if
$n\equiv2\!\!\pmod{4}$. Furthermore,
\begin{itemize}\itemsep0pt
\item $G$ has a noncyclic abelian subgroup isomorphic to $C_3\times C_3$,
whose lift is isomorphic to $C_2\times C_3\times C_3$, that is, abelian; so
$\DComGr(G)\ne\EPowGr(G)$.
\item The involutions
\[x=(1,2)(3,4)(5,6)(7,8) \hbox{ and } y=(1,3)(2,4)(5,6)(7,8)\]
are joined in $\ComGr(G)$ but not in $\DComGr(G)$, so these graphs are also
unequal.
\end{itemize}
\end{example}

\paragraph{Remark} We can formalise the argument in the first bullet point
above. If $G$ is a group and $p$ a prime such that $G$ has a subgroup isomorphic
to $C_p\times C_p$ but $p$ does not divide $|M(G)|$, then $\EPowGr(G)$ and
$\DComGr(G)$ are unequal.

\paragraph{Remark} Kunyavski\u{\i}'s theorem together with the preceding remark
gives other examples of simple groups with all three graphs distinct, such as
the Mathieu group $M_{12}$.

\begin{example} There exists a group $G$  of order $|G|=2^6$ with
nontrivial Schur multiplier such that its  deep commuting graph coincides with
its commuting graph. To see this, let $G=C_8\rtimes Q_8$ be a split extension
of quaternion group by a cyclic group of order eight (this is SmallGroup(64,182) in the GAP~\cite{GAP} library).   Using GAP we compute
that  its Schur multiplier equals $M(G)=C_2$. Take any two commuting elements
$x,y\in G$. If $\langle x,y\rangle$ cyclic, then by (a) of
Theorem~\ref{th3} $x,y$ have a commuting lift to $H$, the Schur cover
of $G$. Assume $\langle x,y\rangle$ is nocyclic. Again, with GAP computations
one sees that, up to conjugacy, the group $G$ has exactly $11$ noncyclic
abelian subgroups and each lifts to an abelian subgroup of $H$. 

In view of Theorem~\ref{th3}  this group $G$ is an example of a group where
Bogomolov and Schur multiplier are isomorphic and nontrivial.
\end{example}

\begin{example}
The dihedral $2$-group,
\[H=D_{2^{n+1}}=\langle a,b\mid a^{2^{n}}=b^2=1, a^b=a^{-1}\rangle\]
is a Schur cover of $G=D_{2^{n}}$ for $n\ge 3$. 

The centre $Z=Z(H)$ of $H$ is generated by $a^{2^{n-1}}$ so the commutator,
$[a^{2^{n-2}},b]=a^{2^{n-1}}\in Z$. That is, $a^{2^{n-2}}$ and $b$ do not
commute in $H$ but they do commute in $G\simeq H/Z$.
This shows that the cover is not CP. So $\DComGr(G)$ is strictly contained in
$\ComGr(G)$. 

Howevever, $\EPowGr(G)=\DComGr(G)$ because if $x,y \in H=D_{2^{n+1}}$
commute and the group $\langle x,y\rangle$ is not cyclic, then  $xy^{-1}$
belongs to the center of $ D_{2^{n+1}}$ so in the quotient group
$H/Z\simeq D_{2^{n}}$ we have $ xZ =yZ$, i.e., $\langle xZ,yZ\rangle$ is a 
cyclic group in $G$.
\end{example}

\section{Further properties}

Unlike the enhanced power graph and commuting graph, the
deep commuting graph of $G$ does not have the property that the induced
subgraph on a subgroup $H$ is the deep commuting graph of $H$. The above
groups provide examples. The deep commuting graph of $C_3\times C_3$ consists
of four triangles with a common vertex; but the induced subgraph of the deep
commuting graph of $S_n$ on a $C_3\times C_3$ subgroup is the complete graph
on nine vertices.

\medskip

We have not considered graph-theoretic properties of the deep commuting graph
such as connectedness or chromatic number.

\paragraph{Acknowledgement} The authors are grateful to Sean Eberhard, Saul
Freedman and Michael Giudici for helpful comments.

\end{document}